\newcounter{mparcnt}
\newtheorem{theorem}{Theorem}[section]%[section]
\newtheorem{lemma}[theorem]{Lemma}%[section]
\newtheorem{proposition}[theorem]{Proposition}%[section]
\newtheorem{remark}[theorem]{Remark}
\def\Om{\Omega}
\def\p{\partial}
\def\De{\Delta}
\def\S{{\Sigma}}
\def\<{\langle}
\def\>{\rangle}
\def\div{{\rm div}}
\def\na{\nabla}
\providecommand{\abs}[1]{\lvert#1\rvert}
\newcommand{\mbB}{\mathbb{B}}
\newcommand{\mbR}{\mathbb{R}}
\newcommand{\mfR}{\mathbf{R}}
\newcommand{\rd}{{\rm d}}
\newcommand{\eq}[1]{\begin{equation}\begin{alignedat}{2} #1 \end{alignedat}\end{equation}}
\numberwithin{equation} {section}
\begin{document}
	
	\title[A characterization of capillary spherical cap]{A characterization of capillary spherical caps by a partially overdetermined problem in a half ball}
 %————————————————————————————-
%\iffalse
\author[Jia]{Xiaohan Jia}
\address[X.J]{School of Mathematics\\
Southeast University\\
 211189, Nanjing, P.R. China}
\email{jiaxiaohan@xmu.edu.cn}
\author[Lu]{Zheng Lu}
\address[Z.L]{School of Mathematical Sciences\\
	Xiamen University\\
	361005, Xiamen, P.R. China
	\newline\indent Mathematisches Institut \\
	Albert-Ludwigs-Universität Freiburg\\
	Freiburg im Breisgau, 79104, Germany}
\email{zhenglu@stu.xmu.edu.cn}
\author[Xia]{Chao Xia}
\address[C.X]{School of Mathematical Sciences\\
	Xiamen University\\
	361005, Xiamen, P.R. China}
\email{chaoxia@xmu.edu.cn}
\author[Zhang]{Xuwen Zhang}
\address[X.Z]{School of Mathematical Sciences\\
	Xiamen University\\
	361005, Xiamen, P.R. China
	%\newline\indent\&
	\newline\indent Institut f\"ur Mathematik, Goethe-Universit\"at, %Robert-Mayer-Str. 10,
	60325, Frankfurt, Germany
}
\curraddr{Mathematisches Institut\\
		Universit\"at Freiburg\\
	Ernst-Zermelo-Str.1\\
		79104\\
  \newline\indent Freiburg\\ Germany}
\email{xuwen.zhang@math.uni-freiburg.de}
	
%\fi
%--------------------------------	
\begin{abstract}
In this note, we study a Serrin-type partially overdetermined problem proposed by Guo-Xia (Calc. Var. Partial Differential Equations 58: no. 160, 2019. \href{https://doi.org/10.1007/s00526-019-1603-3}{https://doi.org/10.1007/s00526-019-1603-3}), and prove a rigidity result that characterizes capillary spherical caps in a half ball. 
		
		\
		
\noindent{\bf MSC 2020:} 35N25, 53A10, 35B35, 35A23\\
{\bf Keywords:}   Serrin's overdetermined problem, Mixed boundary value problem, Capillary hypersurfaces. \\
		
\end{abstract}

\maketitle

%==========
\section{Introduction}\label{Sec-1}
In a celebrated article \cite{Serrin71}, Serrin proposed the overdetermined boundary value problem in a bounded domain $\Omega\subset\mbR^{n+1}$:
\eq{\label{defn-BVP-closed}
\begin{cases}
	\bar{\Delta} f=1&\text{in }\Omega,\\
	f=0&\text{on } \partial \Omega,\\
	\bar{\nabla}_{v}f=c&\text{on } \partial \Omega,
\end{cases}
}
where $\p\Om$ is a smooth hypersurface, $c\in\mbR$ is a constant and $\nu$ is the outer unit normal to $\p\Om$.
Using Alexandrov’s moving plane method, Serrin showed that, \eqref{defn-BVP-closed} admits a solution if and only if $\partial\Omega$ is a round sphere and $f$ is radially symmetric.
An alternative approach to Serrin's symmetric theorem is the so-called integral method,
initiated by
Weinberger in \cite{Weinberger71} and intensively developed by Magnanini-Poggesi in \cite{MP20-1,MP20-2}.
Precisely, for the solution to \eqref{defn-BVP-closed}, Magnanini-Poggesi have shown the following integral identity:
\eq{
\int_\Om(-f)\left\{\abs{\bar\na^2f}^2-\frac{(\bar\De f)^2}{n+1}\right\}\rd x
=\frac12\int_{\p\Om}(f_\nu^2-R^2)(f_\nu-g_\nu)\rd A,
}
where $f_\nu\coloneqq\bar\na_\nu f$, $R$ is any constant, and $g$ is any
quadratic distance function of the form $g(x)=\frac1{2(n+1)}(\abs{x-z}^2-a)$.
Because the term $\abs{\bar\na^2f}^2-\frac{(\bar\De f)^2}{n+1}$ on the left is exactly the trace-free Hessian of $f$, after properly choosing the value of $R$, one can show that $\bar\na^2f$ is umbilical, together with the fact $\bar\De f=1$ on $\Om$, implying that $f$ has to be quadratic, which in turn shows that $\p\Om$ has to be a round sphere due to the Dirichlet boundary condition.  
%reproved this result by the integral method.
%In space forms, Serrin’s symmetric result was proved in \cite{Molzon91} and \cite{SJ98} by adapting the proof of Serrin \cite{Serrin71} via the method of moving planes,
%while the approach analogous to that of \cite{Weinberger71} by using a suitable $P$-function is presented
%in \cite{CIRAOLO19}.
%, Serrin's symmetric result in space forms is proved by using an approach analogous to that of \cite{Weinberger71} by using a suitable $P$-function.
As one can easily see from this argument, 
Serrin's symmetry result has a very close relation with the rigidity of embedded closed constant mean curvature (CMC) hypersurfaces — the groundbreaking Alexandrov's soap bubble theorem.
We refer to a nice survey paper by Magnanini \cite{Magnanini17} regarding this insight.

%Quite recently, in a series of papers, Magnanini-Poggesi \cite{MP19,MP20-1,MP20-2} revisited the proofs of Weinberger, and established two kinds of integral identities, by virtue of which they studied quantitative stability results for Overdetermined problem and Alexandrov theorem.
%In the unit half-ball $\mbB^{n+1}=\{x\in\mbR^{n+1}_+:\abs{x}<1\}$, Guo-Xia \cite{GX19} proved a partially overdetermined problem and the corresponding Serrin-type theorem has been proved.
%In \cite{MP22} they also established two kinds of integral identities, from which they studied quantitative stability
%for Serrin's theorem in the case that $\Sigma\subset\mbB^{n+1}_+$ and meets $\p\mbB^{n+1}$ orthogonally.

In this paper, we are interested in the study of Serrin-type symmetry results in a half ball, which is closely related to the study of the Alexandrov-type theorem for capillary CMC hypersurfaces—hypersurfaces in a given container with constant mean curvature and intersecting the boundary of the container at a constant contact angle $\theta\in (0,\pi)$.
These are natural geometric objects to be studied, from the calculus and variational point of view, since they appear naturally as the stationary points of the free energy functional under volume constraint.
%, and are fundamental objects in the study of equilibrium shapes of a liquid confined in a given container, 
For the background we refer to the beautiful monographs by Finn \cite{Finn86} and Maggi \cite{Mag12}.
Alexandrov-type theorem for capillary CMC hypersurfaces in the half-space has been proved by Wente \cite{Wente80} via the moving plane method.
The half-ball counterpart has been tackled by Ros-Souam in \cite{RS97}. See also recent developments in this direction \cite{JXZ22, JWXZ22, JWXZ23,  JWXZ23b}.

It is natural to ask for a reasonable Serrin-type overdetermined problem to characterize the capillary spherical caps. In this direction, Pacella-Tralli  \cite{PT20} studies an overdetermined problem in a convex cone, while Guo and the third-named author \cite{GX19} propose a partially overdetermined boundary value problem in a half ball.
Both of these works give a characterization of free boundary spherical caps, that is with $\theta=\frac{\pi}{2}$. See also Magnanini-Poggesi \cite{MP22}, where they establish an integral identity to give a new proof of Guo-Xia's rigidity result \cite{GX19} as well as a quantitative stability analysis;
and also Poggesi's work \cite{Pog22}, where a corresponding integral identity for the overdetermined problem in \cite{PT20} can be found.

In a recent paper \cite{JLXZ23}, 
we give a characterization of capillary spherical caps
 for general $\theta\in(0,\pi)$ in the half-space by a Serrin-type overdetermined problem with certain boundary condition. The corresponding quantitative stability analysis is conducted as well.
% via the establishment of an integral identity enlightened by \cite{MP22}, 
%we obtained a complete classification of the Serrin-type partially overdetermined problem in the half-space, where the symmetric property is attained by the spherical cap in the half-space with a generic contact angle $\theta\in(0,\pi)$. The corresponding quantitative stability analysis is conducted as well.
%In terms of corresponding the Serrin-type symmetric results, it is easy to see that the result holds true for free boundary hypersurfaces ($\theta=\frac\pi2$) in the half-space, 
%as it amounts to be a direct application of Pacella-Tralli's work on convex cones \cite{PT20}; while in the half-ball, Guo-Xia \cite{GX19} proved that, in the spirit of Weinberger \cite{Weinberger71}, a certain Serrin-type partially overdetermined problem
%has symmetric property attained by free boundary spherical caps in the half-ball, and is later reproved by Magnanini-Pogessi \cite{MP22} through the establishment of integral identity.
%For capillary hypersurfaces ($\theta\in(0,\pi)$), 
The aim of this paper is to  give a characterization of capillary spherical caps
for general $\theta\in(0,\pi)$ in the half-ball case by means of a  Serrin-type overdetermined problem.
To this aim, we
consider the following mixed boundary value problem:
\eq{\label{eq-mixed-halfball}
\begin{cases}
	\bar\Delta f=1&\text{in }\Omega,\\
	f=0&\text{on }\Sigma,\\
	\bar\nabla_\nu f-f=c&\text{on } T,
\end{cases}
}
where $\Omega\subset\mbB_{+}^{n+1}$ is a connected open set,
$\S\coloneqq{\partial \Omega\cap \mbB_{+}^{n+1}}$ and
$T\coloneqq\partial\Omega\setminus\overline\S$ are two smooth hypersurfaces  such that the union of the closure of $\S$ and $T$ is $\p\Om$.
$\nu(x)=x$ is the outward unit normal with respect to $\Om$ along $T$, and $c\in\mathbb{R}$.

\begin{theorem}\label{Thm-rigid-BVP-halfball}
Let $\Omega\subset\mbB_{+}^{n+1}$ 
be a connected open set,
whose boundary $\p\Om$ is made of two smooth parts
$\S\coloneqq{\partial \Omega\cap \mbB_{+}^{n+1}}$,
$T\coloneqq\partial\Omega\setminus\overline\S$.
If the mixed boundary problem $(\ref{eq-mixed-halfball})$, with $\bar{\nabla}_{\nu}f\equiv c_0$ on $\Sigma$, admits a solution $f\in W^{1,\infty}(\Omega)\cap W^{2,2}(\Omega)$ such that $f\leq 0$,
then $c_0>0$ and $\Omega$ must be the form
\eq{
\Omega_{n+1,c,c_0}
=B_{(n+1)c_0}(z)\cap \mbB_{+}^{n+1},
}
where $z\in\mbR^{n+1}$ satisfies
\eq{
\abs{z}
 =\sqrt{1+(n+1)^{2}c_{0}^{2}-2(n+1)c},
}
while the solution $f$ is given by 
\eq{
	f(x)
 =\frac{\abs{x-z}^{2}-(n+1)^{2}c_{0}^{2}}{2(n+1)}.
}
In particular,
$\Sigma$ is a capillary spherical cap that meets $\partial \mbB^{n+1}$ at a constant contact angle $\theta$, characterized by $\cos \theta =-\frac{c}{c_{0}}$.
\end{theorem}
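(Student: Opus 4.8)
The plan is to adapt the Magnanini--Poggesi integral method to the mixed boundary setting. First I would derive a $P$-function (Weinberger-type) identity adapted to the Robin-type condition on $T$. Concretely, starting from the Pohozaev-type / Rellich--Ne\v{c}as identity applied to $f$ and integrating the pointwise inequality coming from Cauchy--Schwarz, $\abs{\bar\na^2 f}^2\ge\frac{(\bar\De f)^2}{n+1}=\frac1{n+1}$, one obtains
\eq{
\int_\Om(-f)\left\{\abs{\bar\na^2f}^2-\frac1{n+1}\right\}\rd x
=\text{(boundary terms on }\Si\text{ and }T\text{)}.
}
The key computation is to choose a comparison quadratic $g(x)=\frac{1}{2(n+1)}(\abs{x-z}^2-a)$ whose gradient on $T=\{\abs x=1\}$ matches the overdetermined data. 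On $T$ we have $\nu(x)=x$, so $\bar\na_\nu g = \frac1{n+1}\<x-z,x\> = \frac1{n+1}(1-\<x,z\>)$; imposing the Robin condition $\bar\na_\nu f - f = c$ together with the analogous relation for $g$ forces a compatibility that pins down $z$ and $a$. On $\Si$ the Dirichlet condition $f=0$ kills the zeroth-order term, and $\bar\na_\nu f\equiv c_0$ on $\Si$ makes the Hessian-type boundary integral over $\Si$ collapse to a manageable multiple of $\abs\Si$ and $c_0$.

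The second step is to assemble these boundary terms and show the right-hand side is $\le 0$, using $f\le 0$ (so $-f\ge 0$ on the left) and the sign of the factors appearing on $T$ and $\Si$. Here one needs the Heintze--Karcher-type or Reilly-type inequality in the half-ball — i.e.\ an integral estimate for $\int_\Si \frac1H$ against the enclosed weighted volume, incorporating the capillary boundary term on $T$ — to control the $\Si$-contribution; this is the analogue of what Magnanini--Poggesi and the authors' half-space paper \cite{JLXZ23} use. Combining the two sides gives $\int_\Om(-f)\{\abs{\bar\na^2f}^2-\frac1{n+1}\}\,\rd x\le 0$, hence $=0$, and since $-f>0$ on the interior (it cannot vanish on an open set as $\bar\De f=1$) we conclude $\abs{\bar\na^2 f}^2=\frac1{n+1}$ a.e., i.e.\ the trace-free Hessian of $f$ vanishes.

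The third step is the rigidity conclusion: a vanishing trace-free Hessian with $\bar\De f=1$ forces $\bar\na^2 f=\frac1{n+1}\,\mathrm{Id}$, so $f(x)=\frac{1}{2(n+1)}\abs{x-z}^2+b$ for some $z\in\mbR^{n+1}$, $b\in\mbR$. The Dirichlet condition $f=0$ on $\Si$ then says $\Si\subset\{\abs{x-z}^2=-2(n+1)b\}$, a sphere of radius $(n+1)c_0$ once one identifies $b$ via $\bar\na_\nu f = c_0$ on $\Si$ (which gives $\frac{1}{n+1}\abs{x-z}=c_0$ there, so the radius is $(n+1)c_0$ and $c_0>0$); feeding this back into the Robin condition on $T=\p\mbB^{n+1}_+\cap\p\Om$ yields $\frac1{n+1}(1-\<x,z\>) - \frac{1}{2(n+1)}(1-2\<x,z\>+\abs z^2 - (n+1)^2c_0^2)=c$, which simplifies to the stated formula $\abs z^2 = 1+(n+1)^2c_0^2-2(n+1)c$. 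Finally, the contact angle: along $\Si\cap\p\mbB^{n+1}$ the outward normal of $\Si$ (relative to $\Om$) is $\frac{x-z}{(n+1)c_0}$ and the normal of $\p\mbB^{n+1}$ is $x$, so $\cos\theta = \<\text{normals}\>$ computes to $-\frac{c}{c_0}$ after substituting $\abs z^2$. The main obstacle I anticipate is the low regularity ($f\in W^{1,\infty}\cap W^{2,2}$ only, with a genuinely mixed boundary condition whose interface $\overline\Si\cap\overline T$ is a corner): the Rellich--Ne\v{c}as and Reilly identities must be justified by careful approximation or by working on the regular part and controlling the corner contribution, and establishing that the boundary integrals behave and that $\bar\na_\nu f$ is well-defined $\mcH^n$-a.e.\ on $\Si$ and $T$ is the delicate technical heart of the argument.
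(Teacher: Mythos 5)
Your overall strategy (a Magnanini--Poggesi-type integral identity with a comparison quadratic adapted to the Robin condition on $T$, then rigidity from the vanishing of the trace-free Hessian) is the right one, and your third step — recovering $f(x)=\frac{1}{2(n+1)}\abs{x-z}^2+b$, identifying the radius $(n+1)c_0$, the formula for $\abs{z}$, and $\cos\theta=-c/c_0$ — matches the paper. But there are two genuine gaps in the middle. First, the identity you write down is the unweighted one, $\int_\Om(-f)\{\abs{\bar\na^2 f}^2-\tfrac{1}{n+1}\}\,\rd x=$ boundary terms, which is the identity for the pure Dirichlet (closed) Serrin problem; in the half-ball it does not close. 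The paper's identity carries the harmonic weight $V(x)=x_{n+1}$, i.e. $\int_\Om(-Vf)\{\cdots\}\,\rd x$, and this weight is not cosmetic: on $T\subset\p\mbB^{n+1}$ one has $\nu(x)=x$, hence $V_\nu=V$ and $\bar\De V=0$, and only the combination $V(f-\hat g)_\nu-V_\nu(f-\hat g)$ vanishes on $T$ under the Robin conditions for $f$ and $\hat g$. Moreover, tangentially differentiating the Robin condition (the paper's Lemma on $\<X,\bar\na^2 f\,\nu\>=0$ along $T$) yields $Vf_{ij}\nu_i\nu_j=(f_{n+1})_\nu$ on $T$, which is what converts the Hessian boundary term on $T$ into a divergence that can be integrated back over $\Om$. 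With your unweighted identity the $T$-boundary terms involving $f_{ij}\nu_i\nu_j$ and $\abs{\bar\na f}^2$ do not cancel, and your sketch gives no mechanism for them.

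Second, your step two is a misdirection: no Heintze--Karcher or Reilly-type inequality in the half-ball is needed, nor would it help, because the factor $[V(f-\hat g)_\nu-V_\nu(f-\hat g)]$ on $\Si$ has no definite sign, and such a capillary HK inequality is itself a nontrivial theorem you do not establish. The actual mechanism is exact vanishing, not a sign argument: since the integrand $V(f-\hat g)_\nu-V_\nu(f-\hat g)$ vanishes on $T$ and integrates to zero over $\p\Om$ (divergence theorem, using $\bar\De f=\bar\De\hat g=1$ and $\bar\De V=0$), one may subtract any constant $\left(\tfrac{R}{n+1}\right)^2$ from $\abs{\bar\na f}^2$ in the $\Si$-integral; choosing $R=(n+1)c_0$ and using that $f=0$, $f_\nu\equiv c_0$ on $\Si$ gives $\abs{\bar\na f}^2=c_0^2$ there, so the right-hand side is identically zero. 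Then $f\le 0$ (hence $f<0$ by the strong maximum principle) and $V>0$ in $\Om$ force $\bar\na^2 f=\tfrac{1}{n+1}\mathrm{Id}$, and your step three finishes the proof. As written, your proposal would stall at the uncontrolled $T$-terms and at the unproven inequality; fixing it amounts to introducing the weight $V$ and replacing the sign argument by the choice $R=(n+1)c_0$.
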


\begin{remark}
\normalfont
\begin{enumerate}[i.]
\item The case $c=0$ has been proposed and handled by Guo-Xia \cite{GX19}. 
\item When $c\leq 0$, the condition $f\leq 0$ is automatically satisfied by maximum principle, see \cite[Proposition 2.3]{GX19}.
\item
If $\Omega$ is such that the contact angle function $\theta$ at each point on $\p\S$ defined as \eqref{defn-contact-func}, which is not essentially constant but satisfies $0<\theta(x)\leq\frac{\pi}{2}$, then the regularity assumption $f\in W^{1,\infty}(\Omega)\cap W^{2,2}(\Omega)$ is valid, thanks to \cite{Liebermann86,Liebermann89}.
For detailed accounts, see \cite[Appendix A]{JXZ22}.
\end{enumerate}
\end{remark}

We shall establish the following crucial integral identity:
\eq{\label{iden-integral}
	&\int_{\Omega}(-Vf)\left\{\abs{\bar\nabla^{2}f}^{2}-\frac{(\bar\De{f})^{2}}{n+1}\right\}\rd x\\
	&=\frac{1}{2}\int_{\Sigma}(f_{\nu}^{2}-(\frac{R}{n+1})^{2})[V(f-\hat g)_{\nu}-V_{\nu}(f-\hat g)]\rd A,
}
where $V(x):=x_{n+1}$,
%and $\mfR=\mfR(c,\Omega)$ is a constant determined as
%\eq{\label{defn-ref-radius}
%\mfR
%\coloneqq(n+1)\frac{\int_{\Omega}Vdx-c\int_{T}VdA}{\int_{\Sigma}VdA},
%}
%which will be referred to as the \textit{reference radius} of $\Omega$,
while $\hat g$ is a quadratic function defined as

\eq{
\hat g(x)=\frac{\abs{x-\hat O}^{2}-\hat R^2}{2(n+1)},
}
here $\hat O\in\mbR^{n+1}$ is such that $\abs{\hat O}^2=1+\hat R^2-2(n+1)c$, $\hat R\in\mbR$.
By virtue of the choice of $\hat O$, it is easy to see that $\hat g$ satisfies
\begin{align}\label{eq:hat-g}
\begin{cases}
	\bar\De \hat g
	=1&\text{in }\Omega,\\
	(\hat g)_\nu-\hat g
	=c&\text{on }T.
\end{cases}
\end{align}

Theorem \ref{Thm-rigid-BVP-halfball} is a direct consequence of \eqref{iden-integral}. In the case $c=0$, this identity has been proved by Magnanini-Poggesi \cite{MP22}.

%----------
%\subsection{Organization of the Paper}
%In Section \ref{Sec-2} we quickly review some preliminaries on capillary hypersurfaces.
%In Section \ref{Sec-3}, we prove the integral identity \eqref{iden-integral}, and the main result Theorem \ref{Thm-rigid-BVP-halfball}.

 \
 
%============
\section{Notations and Preliminaries}\label{Sec-2}
	
\subsection{Notations}\label{Sec-2-1}
Let $\mbR^{n+1}$ be the Euclidean space,  $\left<\cdot,\cdot\right>$ be the scalar product.
Set $\mbB^{n+1}=\{x\in\mbR^{n+1}:\abs{x}<1\}$, $\mbB^{n+1}_+=\{x\in\mbR^{n+1}:\abs{x}<1,x_{n+1}>0\}$, and $E_{n+1}\coloneqq(0,\ldots,0,1)$.
We will use $B_r(z)$ to denote the $(n+1)$-dimensional open ball in $\mbR^{n+1}$ with radius $r$ that is centered at $z$.
%{\color{red}
%Denote with $\bar N(x)$ the outward unit normal to $\p\mbB^{n+1}$, i.e., $\bar N(x)=x$.
%}

Throughout the paper, we consider the domains with the following properties:
$\Omega\subset\mbB_{+}^{n+1}$ is a connected open set with boundary $\partial \Omega=\overline\S\cup \overline T$ where $\Sigma={\partial \Omega\cap \mbB_{+}^{n+1}}$ and $T=\partial \Omega\setminus\overline\S$ are smooth $n$-dimensional manifolds,
and $\Gamma\coloneqq\overline\Sigma\cap \overline T$ is a smooth $(n-1)$-dimensional manifold with $\mathcal{H}^{n-1}(\Gamma)>0$.

We use the following notation for normal vector fields.
$\nu$ denotes the outward unit normal to the smooth part of $\partial\Omega$, namely, $\S$ and $T$.
In particular, we have
\eq{
\nu(x)=x,\quad x\in T.
}
To introduce the boundary contact angle condition, 
we use temporarily the notations $N,\bar N$ to denote the outward unit normal to $\overline\S,\overline T$ with respect to $\Om$ respectively.
Let $\mu$ be the outward unit co-normal to $\Gamma\subset \overline\S$ and $\bar \nu$ be the outward unit co-normal to $\Gamma\subset \overline T$. Under this convention, along $\Gamma$ the bases $\{N,\mu\}$ and $\{\bar \nu,\bar N\}$ have the same orientation in the normal bundle of $\p \S\subset \mfR^{n+1}$.
We define the contact angle function of $\S$ along $\Gamma$, $\theta:\Gamma\rightarrow(0,\pi)$, by
\eq{\label{defn-contact-func}
\left<N(x),-\bar N(x)\right>
=\left<\mu(x),\bar\nu(x)\right>
=\cos\theta(x).
}
In particular, we say that $\S$ meets $\p\mbB^{n+1}$ with a fixed contact angle $\theta$ if along $\Gamma$, \begin{align}\label{munu}
&&\mu=\sin \theta \bar N+\cos\theta \bar \nu,\quad
N=-\cos \theta \bar N+\sin \theta \bar \nu.
\end{align}

We denote by $\bar\nabla,\bar\Delta,\bar\nabla^{2}$ and $\bar{\div}$, the gradient, the Laplacian, the Hessian and the divergence on $\mbR^{n+1}$ respectively.
In the following, for any function $g$, we denote by  $g_{i}$ the derivative of $g$ with respect to $x_i$, by $g_{ij}$
the second-order derivative of $g$ with respect to $x_i$ and $x_j$, and repeated indices will be summed.
%{\color{purple}for example, $g_ig_{ij}$ means $\sum_{i=1}^{n+1}g_ig_{ij}$.}

%-----------
\subsection{Preliminaries}
%-----------

The classical $P$-function related to the solution $f$ of $\bar\Delta f=1$ is denoted by
\eq{
P
 =\frac{1}{2}|\bar{\nabla}f|^{2}-\frac{1}{n+1}f.
}
A direct computation then gives
\eq{
\bar\De P
=\abs{\bar\nabla^{2}f}^{2}-\frac{(\bar\Delta f)^{2}}{n+1}.
}
Using the Cauchy-Schwarz inequality, one finds
\eq{
\bar\De P\geq 0,
}
and equality holds if and only if $\bar\nabla^{2}f$ is proportional to the identity matrix.

Note that given $z\in\mbR^{n+1}$ and an non-empty domain $\Omega\subset\mbB^{n+1}_+$ of the form $\Omega=B_{r}(z)\cap\mbB_{+}^{n+1}$, the related quadratic function $f=\frac{|x-z|^{2}-r^{2}}{2(n+1)}$ solves $\eqref{eq-mixed-halfball}$ for some $c\in \mbR$ with $f_{\nu}\equiv \frac{r}{n+1}$ on $\Sigma$, and a simple computation yields
\eq{\label{eq-R2}
\frac{r}{n+1}\int_{\Sigma}V\rd A
=&\int_{\Sigma}Vf_{\nu}\rd A
=\int_{\partial\Omega}V f_{\nu}\rd A-\int_{T} V f_{\nu}\rd A\\
=&\int_{\partial\Omega}f V_{\nu} \rd A+\int_{\Omega}(V{\bar\Delta} f-f{\bar\Delta} V)\rd x-\int_{T}V f_\nu\rd A\\
=&\int_{T}f V_{\nu} \rd A+\int_{\Omega}V\rd x-\int_{T}V f_\nu\rd A\\
=&\int_{\Omega}V\rd x-c\int_{T}V\rd A,
}
where the third equality holds due to integration by parts,the fourth equality holds because $\bar\De f=1, \bar\Delta V=0$ in $\Omega$, and the last equality holds since $V_\nu=V$ on $T$.
Enlightened by this we set for a general $\Omega$ and a fixed $c\in\mbR$,
\eq{
\mfR(c,\Omega)
\coloneqq(n+1)\frac{\int_{\Omega}V\rd x-c\int_{T}V\rd A}{\int_{\Sigma}V\rd A}
}
which will be referred to as the \textit{reference radius} of $\Omega$ in $\mbB_{+}^{n+1}$.
Such a reference radius generalizes to the present settings \cite[(1.4)]{MP22}.

\section{Proof of Theorem \ref{Thm-rigid-BVP-halfball}}\label{Sec-3}

Regarding the solution to \eqref{eq-mixed-halfball}, we need the following observation that is derived in \cite{GX19}.
\begin{lemma}\label{GX-Prop 2.4}\normalfont(\cite[Proposition 2.4]{GX19})
Let $X$ be a tangent vector field to $T$ and let $f$ be a solution to \eqref{eq-mixed-halfball}.
Then, we have that
\eq{
	\left<X,\bar\nabla^{2}f\nu\right>
 =0\text{ along }T.
}
\end{lemma}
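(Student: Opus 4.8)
The plan is to differentiate the Robin-type condition $\bar\nabla_\nu f-f=c$ on $T$ in directions tangent to $T$, and to exploit that $\nu(x)=x$ along $T$, so that the ambient derivative of the unit normal in a tangent direction is that same tangent direction. This reduces the identity to a one-line cancellation.

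First I would record the regularity input: since $T$ is a smooth hypersurface, interior elliptic regularity together with boundary (Schauder) regularity for the oblique/Robin problem shows that $f$ is smooth in a neighborhood of every interior point of $T$, i.e.\ away from the corner $\Gamma=\overline\Sigma\cap\overline T$. Hence all the manipulations below are classical there, and the identity then extends to all of $T$ by continuity. (If one wished to avoid this, one could instead test the weak formulation against fields tangent to $T$ and integrate by parts, but the pointwise argument is cleaner once regularity is available.)

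Now fix such a point of $T$ and let $X$ be a vector field tangent to $T$ near it. Since $\bar\nabla_\nu f-f\equiv c$ on $T$ and $X$ is tangent to $T$, differentiating gives $X(\bar\nabla_\nu f)-Xf=0$. Writing $\bar\nabla_\nu f=\langle\bar\nabla f,\nu\rangle$ and using the product rule for the Euclidean connection,
\[
X\langle\bar\nabla f,\nu\rangle=\langle\bar\nabla_X\bar\nabla f,\nu\rangle+\langle\bar\nabla f,\bar\nabla_X\nu\rangle=\bar\nabla^{2}f(X,\nu)+\langle\bar\nabla f,\bar\nabla_X\nu\rangle.
\]
Because $\nu$ is the restriction to $T\subset\partial\mbB^{n+1}$ of the position vector field $x\mapsto x$, whose Euclidean differential is the identity, we have $\bar\nabla_X\nu=X$; therefore $\langle\bar\nabla f,\bar\nabla_X\nu\rangle=\langle\bar\nabla f,X\rangle=Xf$. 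Substituting back,
\[
0=X(\bar\nabla_\nu f)-Xf=\bar\nabla^{2}f(X,\nu)+Xf-Xf=\langle X,\bar\nabla^{2}f\,\nu\rangle,
\]
which is the assertion.

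The computation itself is short, so there is no substantial analytic obstacle; the only point needing care is justifying that $f$ is differentiable enough up to $T$ (and bounded away from $\Gamma$), which is why I would isolate the regularity statement at the outset. Everything else is the product rule plus the single geometric fact $\bar\nabla_X\nu=X$, valid precisely because $T$ lies on a sphere centered at the origin.
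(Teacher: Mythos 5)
Your argument is correct and is essentially the paper's own proof: differentiate the Robin condition $\bar\nabla_\nu f-f=c$ along a tangent direction $X$, use the product rule, and exploit $\bar\nabla_X\nu=X$ (since $\nu(x)=x$ on $T$) so that the remaining terms cancel. The extra regularity remark is fine but not part of the paper's (purely formal) computation.
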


\begin{proof}
Since $f_\nu-f=c$, by differentiating both sides with respect to $X$ and noticing that $\nu(x)=x$ on $T$, we obtain
\eq{
\bar\nabla_{X}f
=\bar\nabla_{X}(\left<\bar\nabla f,\nu\right>)
=\left<X,\bar\nabla^{2}f\nu\right>+\left<\bar\nabla f,\bar\nabla_{X}\nu\right>
=\left<X,\bar\nabla^{2}f\right>+\bar\nabla_{X}f \nu, 
}
the assertion then follows.
\end{proof}
The following is the crucial integral identity:
\begin{proposition}\label{Prop-identity halfball}
Given a domain $\Omega\subset\mbB_{+}^{n+1}$ as in Section \ref{Sec-2-1}.
Let $f\in W^{1,\infty}(\Omega)\cap W^{2,2}(\Omega)$ be a solution to the mixed boundary problem \eqref{eq-mixed-halfball}.
Then for any $R\in \mbR$,
we have that
\eq{\label{int-identity halfball}
	&\int_{\Omega}-Vf\left\{|\bar\nabla^{2} f|^{2}-\frac{(\bar\De f)^{2}}{n+1}\right\}\rd x\\
	&~~=\frac{1}{2}\int_{\Sigma}\left(\abs{\bar\nabla f}^2-\left(\frac{R}{n+1}\right)^2\right)[V(f-\hat g)_{\nu}-V_{\nu}(f-\hat g)]\rd A.
}
\end{proposition}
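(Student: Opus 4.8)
The plan is to follow the integral (or $P$-function) method of Weinberger and Magnanini--Poggesi, adapted to the weight $V(x)=x_{n+1}$ and the Robin-type boundary condition on $T$. First I would introduce the weighted $P$-function quantity $Q := P - \tfrac12(\tfrac{R}{n+1})^2$ where $P = \tfrac12|\bar\nabla f|^2 - \tfrac1{n+1}f$, and compute $\bar\De P = |\bar\nabla^2 f|^2 - \tfrac{(\bar\De f)^2}{n+1}$ as recalled in the Preliminaries. The basic move is to integrate $V\,\bar\De P$ (equivalently $-Vf$ times it, after an extra integration by parts against $f$) over $\Omega$ and convert it into boundary terms on $\Sigma$ and $T$ via Green's identities, using that $\bar\De V = 0$. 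Concretely, I would start from
\eq{
\int_\Omega (-Vf)\,\bar\De P \,\rd x
= \int_\Omega \langle \bar\nabla(Vf),\bar\nabla P\rangle \rd x - \int_{\p\Omega} V f\, P_\nu \rd A,
}
and then integrate by parts once more on the first term, or alternatively use the symmetric Green identity $\int_\Omega (Vf\,\bar\De P - P\,\bar\De(Vf))\rd x = \int_{\p\Omega}(Vf\,P_\nu - P\,(Vf)_\nu)\rd A$ together with $\bar\De(Vf) = V\bar\De f + 2\langle\bar\nabla V,\bar\nabla f\rangle = V + 2f_{n+1}$. The point is that the Dirichlet condition $f=0$ on $\Sigma$ kills several terms there, leaving a term proportional to $P\,V_\nu$ and $P\,(Vf)_\nu$ restricted to $\Sigma$, which on $\Sigma$ reduces to the factor $|\bar\nabla f|^2 - (\tfrac{R}{n+1})^2$ appearing in the claimed identity (since on $\Sigma$, $P = \tfrac12|\bar\nabla f|^2$).

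Next I would handle the boundary $T$. Here the essential input is Lemma \ref{GX-Prop 2.4}: along $T$, $\bar\nabla^2 f\,\nu$ has no tangential component, so $\bar\nabla^2 f\,\nu = \langle\bar\nabla^2 f\,\nu,\nu\rangle\nu = f_{\nu\nu}\nu$ on $T$. Combined with $\nu(x)=x$ (so that $\bar\nabla_X\nu = X$ for $X$ tangent to $T$) and the Robin condition $f_\nu - f = c$, this lets me compute $P_\nu$ on $T$ explicitly: $P_\nu = \langle\bar\nabla^2 f\,\bar\nabla f,\nu\rangle - \tfrac1{n+1}f_\nu = f_\nu f_{\nu\nu} - \tfrac1{n+1}f_\nu$ after using that $\bar\nabla f$ decomposes into its $\nu$-component $f_\nu\nu$ plus a tangential part which pairs with $\bar\nabla^2 f\,\nu$ to zero. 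One then also differentiates $f_\nu = f + c$ in the $\nu$ direction to relate $f_{\nu\nu}$ to lower-order data. Doing the same computations for the comparison function $\hat g$ (which by \eqref{eq:hat-g} satisfies $\bar\De\hat g = 1$ in $\Omega$ and $(\hat g)_\nu - \hat g = c$ on $T$, i.e. the same Robin condition as $f$) and subtracting, I expect all the $T$-boundary contributions to cancel identically — this is precisely why $\hat g$ is engineered to satisfy the same equation and the same $T$-condition as $f$, and why only $\Sigma$-terms survive. The role of $\hat g$ is to absorb/cancel the $T$-boundary terms and to supply, on $\Sigma$, the combination $V(f-\hat g)_\nu - V_\nu(f-\hat g)$ in place of a bare $Vf_\nu - V_\nu f$; since $\hat g$ is quadratic with $\bar\De\hat g=1$, replacing $f$ by $f-\hat g$ in the Green-identity bookkeeping does not change $\bar\De P$ on the left but does change the admissible boundary terms.

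So the skeleton is: (1) apply the weighted Green identity to $f$ and $P$ over $\Omega$; (2) apply it also with $\hat g$ in place of $f$ (or more precisely set up the analogue of the Magnanini--Poggesi identity using the auxiliary quadratic $\hat g$); (3) use $f=0$ on $\Sigma$ to simplify; (4) use Lemma \ref{GX-Prop 2.4}, $\nu=x$ on $T$, and the common Robin condition for $f$ and $\hat g$ to show the $T$-integrals cancel; (5) collect the surviving $\Sigma$-integral into the stated form, noting $P|_\Sigma = \tfrac12|\bar\nabla f|^2$. The main obstacle I anticipate is the low-regularity hypothesis $f\in W^{1,\infty}(\Omega)\cap W^{2,2}(\Omega)$ rather than classical smoothness: the integrations by parts and the pointwise use of Lemma \ref{GX-Prop 2.4} on $T$ (and controlling the behavior near the non-smooth edge $\Gamma = \overline\Sigma\cap\overline T$) require justification, presumably by an approximation argument or by working on $\Omega_\varepsilon = \Omega\cap\{V>\varepsilon\}$ and letting $\varepsilon\to 0$, using that the weight $V$ vanishes on the flat part of $\p\mbB^{n+1}_+$ and that $W^{2,2}$ gives enough integrability for the Hessian terms. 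The bookkeeping of which terms vanish on which boundary piece — and checking that no edge term on $\Gamma$ appears — is the delicate part; the algebra on $T$ using Lemma \ref{GX-Prop 2.4} is straightforward once set up.
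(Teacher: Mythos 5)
Your plan follows essentially the same route as the paper: integrate $-Vf\,\bar\De P$ by parts with the weight $V=x_{n+1}$, use $\bar\De V=0$ and the Dirichlet condition on $\Sigma$, and exploit Lemma \ref{GX-Prop 2.4} together with the fact that $\hat g$ is engineered to satisfy the same Robin condition as $f$ on $T$ (see \eqref{eq:hat-g}) so that only $\Sigma$-terms survive; the low-regularity caveat you raise is exactly what the paper dispatches by citing \cite{Pog22}. One structural remark: in the actual computation the $T$-contributions do not cancel pairwise against $\hat g$-analogues at the boundary; rather, the troublesome term $-\int_{\p\Om}Vf f_i f_{ij}\nu_j\,\rd A$ is split using $\hat g$, and the resulting $T$-integrals are pushed back into the bulk (via Lemma \ref{GX-Prop 2.4}, $\bar\De f_{n+1}=0$, and $(\hat g)_{ij}=\delta_{ij}/(n+1)$) where they recombine with the other interior terms. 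This is in the spirit of your skeleton, but it is bookkeeping you would still have to carry out.

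The one genuine gap is the appearance of the arbitrary constant $R$. You assert that the surviving $\Sigma$-term ``reduces to the factor $\abs{\bar\nabla f}^2-(\tfrac{R}{n+1})^2$ since $P=\tfrac12\abs{\bar\nabla f}^2$ on $\Sigma$,'' but the Green identities by themselves can only produce $\abs{\bar\nabla f}^2$; no free parameter can come out of integration by parts. Since the identity is claimed for every $R\in\mbR$, the missing step is the observation that the constant $(\tfrac{R}{n+1})^2$ may be inserted at no cost because
\eq{
\int_{\Sigma}\left[V(f-\hat g)_{\nu}-V_{\nu}(f-\hat g)\right]\rd A=0 .
}
This requires two facts: (i) $V(f-\hat g)_{\nu}-V_{\nu}(f-\hat g)\equiv 0$ on $T$, by $V_\nu=V$ there together with the common Robin condition $f_\nu-f=c=(\hat g)_\nu-\hat g$; and (ii) the integral of the same bracket over all of $\p\Om$ vanishes by Green's identity, since $\bar\De f=\bar\De\hat g=1$ and $\bar\De V=0$ give $\int_{\p\Om}[V(f-\hat g)_{\nu}-V_{\nu}(f-\hat g)]\rd A=\int_{\Om}[V\bar\De(f-\hat g)-(f-\hat g)\bar\De V]\rd x=0$. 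Fact (i) is also what justifies your final restriction of the boundary integral from $\p\Om$ to $\Sigma$. With this step supplied, your outline matches the paper's proof.
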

	
\begin{proof}
We begin by noticing that, since $f\in W^{1,\infty}(\Omega)\cap W^{2,2}(\Omega)$ solves \eqref{eq-mixed-halfball}, all the the integration by parts used in the proof are allowed,
see e.g., \cite[Proposition 2.1]{Pog22}.

Integrating by parts, we get 
\eq{
	\int_{\Omega}-Vf\bar\Delta P\rd x=-\int_{\partial\Omega}VfP_{\nu}\rd A+\int_{\partial\Omega}P(Vf)_{\nu}\rd A-\int_{\Omega}P\bar\Delta(Vf)\rd x.
}
Since $V=x_{n+1}$,
we have that $\bar\Delta V=0$
and hence
$\bar\Delta(Vf)=V+2\left<\bar\nabla V,\bar\nabla f\right>$.
Thus
\eq{
\int_{\Omega}-Vf\bar\Delta P\rd x
 =&-\int_{\partial\Omega}VfP_{\nu}\rd A+\int_{\partial\Omega}P(Vf)_{\nu}\rd A-\int_{\Omega}VP\rd x\\
 &-2\int_{\Omega}P\left<\bar\nabla V,\bar\nabla f\right>\rd A.
}
Computing $P_\nu$,
we get
\eq{\label{eq-V-intergral1}
\int_{\Omega}-Vf\bar\Delta P\rd x
=&-\int_{\partial \Omega}Vff_if_{ij}\nu_{j}\rd A+\frac1{n+1}\int_{\partial\Omega}Vff_{\nu}\rd A\\
&+\int_{\partial\Omega}P(Vf)_{\nu}\rd A-\int_{\Omega}VP\rd x
 -2\int_{\Omega}P\left<\bar\nabla V,\bar\nabla f\right>\rd A.
}

To deal with the term $-\int_{\partial \Omega}Vff_if_{ij}\nu_{j}\rd A$, we construct the following auxiliary function
\eq{
\hat g(x)
\coloneqq\frac{\abs{x-\hat O}^{2}-\hat R^2}{2(n+1)},
}
where $\hat O$ satisfies $\abs{\hat O}^2=1+\hat R^2-2(n+1)c$.
Using the auxiliary function $\hat g$, we can write
\eq{\label{eq-V-difficult term}
&-\int_{\partial \Omega}Vff_if_{ij}\nu_{j}\rd A\\
&~~=-\int_{\partial \Omega}Vf(f_i-(\hat g)_i)f_{ij}\nu_{j}\rd A
-\int_{\partial \Omega}Vf(\hat g)_if_{ij}\nu_{j}\rd A\\
&~~=: {\bf I}+{\bf II}.
}

Applying Lemma \ref{GX-Prop 2.4}, we get that on $T$ the following identities hold:
\eq{
(f_i-(\hat g)_i)f_{ij}\nu_j=&(f_\nu-(\hat g)_\nu)f_{ij}\nu_i\nu_j, \\
(f_{n+1})_\nu=& Vf_{ij}\nu_i\nu_j.
}
Substituting this back into the term ${\bf I}$ 
and using
the fact that $f=0$ on $\Sigma$, we have
\eq{
{\bf I}
=&-\int_{T}Vf(f_i-(\hat g)_i)f_{ij}\nu_{j}\rd A
=-\int_{T}Vf(f_\nu-(\hat g)_\nu)f_{ij}\nu_i\nu_j\rd A\\
=&-\int_{T}f(f_\nu-(\hat g)_\nu)(f_{n+1})_\nu \rd A
=-\int_{T}f(f-\hat g)(f_{n+1})_\nu \rd A\\
=&-\int_{\Omega}\bar\div\left(f(f-\hat g)\bar\nabla f_{n+1}\right)\rd x
=-\int_{\Omega}\<\bar\nabla\left(f(f-\hat g)\right),\bar\nabla f_{n+1}\>\rd x\\
=&-\frac12\int_{\Omega}(2f-\hat g){\left({\abs{\bar\nabla f}}^2\right)}_{n+1}\rd x
+\int_{\Omega}f\<\bar\nabla \hat g,\bar\nabla f_{n+1}\>\rd x,
}
where the fourth equality holds by virtue of the Robin conditions of $f$ and $\hat g$ on $T$ (recall \eqref{eq:hat-g}), the fifth equality holds due to the divergence theorem, and the sixth equality holds since $\bar\Delta f=1$ in $\Omega$ and hence $\bar\De f_{n+1}=0$ in $\Omega$. 
Noting that
\eq{
&-\frac12\int_{\Omega}(2f-\hat g){\left({\abs{\bar\nabla f}}^2\right)}_{n+1}\rd x\\
& ~~=-\frac12\int_{\Omega}\bar\div{\left((2f-\hat g){\abs{\bar\nabla f}}^2 E_{n+1}\right)}\rd x
+\frac12\int_{\Omega}(2f-\hat g)_{n+1}{{\abs{\bar\nabla f}}^2}\rd x\\
&~~=-\frac12\int_{\partial\Omega}(2f-\hat g){\abs{\bar\nabla f}}^2\<E_{n+1},\nu\>\rd A
+\frac12\int_{\Omega}(2f_{n+1}-(\hat g)_{n+1}){{\abs{\bar\nabla f}}^2}\rd x,
}
it follows that
\eq{\label{eq-V-term I}
{\bf I}
=& -\frac12\int_{\partial\Omega}(2f-\hat g){\abs{\bar\nabla f}}^2\<E_{n+1},\nu\>\rd A
+\frac12\int_{\Omega}(2f_{n+1}-(\hat g)_{n+1}){{\abs{\bar\nabla f}}^2}\rd x\\
&+\int_{\Omega}f\<\bar\nabla \hat g,\bar\nabla f_{n+1}\>\rd x.
}

On the other hand, using the divergence theorem, we get
\eq{
{\bf II}
=&-\int_{\partial \Omega}Vf(\hat g)_if_{ij}\nu_{j}\rd A
=-\int_{\Omega} (Vf(\hat g)_if_{ij})_j\rd x\\
=&-\int_{\Omega}f\<\bar\nabla g_1,\bar\nabla f_{n+1}\>\rd x
-\frac12\int_{\Omega}V\<\bar\nabla\hat g,\bar\nabla \abs{\bar\nabla f}^2\>\rd x
-\frac1{n+1}\int_{\Omega}Vf\rd x,
}
where the third equality holds because $\bar\Delta f=1$, $(\hat g)_{ij}=\frac{\delta_{ij}}{n+1}$, and $\bar\nabla V=E_{n+1}$.
Keeping track of the second term and integrating by parts again, we find
\eq{
&-\frac12\int_{\Omega}V\<\bar\nabla g_1,\bar\nabla \abs{\bar\nabla f}^2\>\rd x\\
&~~=-\frac12\int_{\Omega}\bar\div\left(V\abs{\bar\nabla f}^2 \bar\nabla\hat g\right)\rd x
+\frac12\int_{\Omega}\abs{\bar\nabla f}^2\<\bar\nabla V, \bar\nabla\hat g\>\rd x
+\frac12\int_{\Omega}V \abs{\bar\nabla f}^2\bar\Delta\hat g\rd x \\
&~~=-\frac12\int_{\partial\Omega}V\abs{\bar\nabla f}^2 (\hat g)_{\nu}\rd A
+\frac12\int_{\Omega}\abs{\bar\nabla f}^2 (\hat g)_{n+1} \rd x
+\frac12\int_{\Omega}V \abs{\bar\nabla f}^2\rd x.
} 
Hence the term ${\bf II}$ reads
\eq{\label{eq-V-term II}
{\bf II}
=&-\int_{\Omega}f\<\bar\nabla g_1,\bar\nabla f_{n+1}\>\rd x
-\frac1{n+1}\int_{\Omega}Vf\rd x\\
&-\frac12\int_{\partial\Omega}V\abs{\bar\nabla f}^2 (\hat g)_{\nu}\rd A
+\frac12\int_{\Omega}\abs{\bar\nabla f}^2 (\hat g)_{n+1} \rd x
+\frac12\int_{\Omega}V \abs{\bar\nabla f}^2\rd x. 
}
Substituting \eqref{eq-V-term I} and \eqref{eq-V-term II} back into \eqref{eq-V-difficult term}, we obtain
\eq{\label{eq-V-difficult term2}
-\int_{\partial \Omega}Vff_if_{ij}\nu_{j}\rd A
=& -\frac12\int_{\partial\Omega}(2f-\hat g){\abs{\bar\nabla f}}^2\<E_{n+1},\nu\>\rd A\\
&-\frac12\int_{\partial\Omega}V\abs{\bar\nabla f}^2 (\hat g)_{\nu}\rd A
+\int_{\Omega}f_{n+1}{{\abs{\bar\nabla f}}^2}\rd x\\
&-\frac1{n+1}\int_{\Omega}Vf\rd x
+\frac12\int_{\Omega}V \abs{\bar\nabla f}^2\rd x.
}
Combing \eqref{eq-V-intergral1} with \eqref{eq-V-difficult term2}, a direct computation shows
\eq{\label{eq-V-intergral2}
&\int_{\Omega}-Vf\bar\Delta P\rd x\\
%=& -\frac12\int_{\partial\Omega}(2f-g_1){\abs{\bar\nabla f}}^2\<E_{n+1},\nu\>\rd A\\
%&-\frac12\int_{\partial\Omega}V\abs{\bar\nabla f}^2 (g_1)_{\nu}\rd A
%+\int_{\Omega}f_{n+1}{{\abs{\bar\nabla f}}^2}\rd x\\
%&-\frac1{n+1}\int_{\Omega}Vf\rd x
%+\frac12\int_{\Omega}V \abs{\bar\nabla f}^2\rd x\\
%&+\frac1{n+1}\int_{\partial\Omega}Vff_{\nu}\rd A
%+\int_{\partial\Omega}P(Vf)_{\nu}\rd A\\
%&-\int_{\Omega}VP\rd x
% -2\int_{\Omega}P\left<\bar\nabla V,\bar\nabla f\right>\rd A\\
&~~=\frac{2}{n+1}\int_{\Omega}ff_{n+1}\rd x
 -\frac12\int_{\partial\Omega}(2f-\hat g){\abs{\bar\nabla f}}^2\<E_{n+1},\nu\>\rd A\\
&~~~-\frac12\int_{\partial\Omega}V\abs{\bar\nabla f}^2 (\hat g)_{\nu}\rd A
+\frac1{n+1}\int_{\partial\Omega}Vff_{\nu}\rd A
+\int_{\partial\Omega}P(Vf)_{\nu}\rd A\\
&~~=\frac{2}{n+1}\int_{\Omega}ff_{n+1}\rd x-\frac1{n+1}\int_{\partial\Omega}f^2
V_\nu
\rd A\\
&~~~+\frac12\int_{\partial\Omega}\abs{\bar\nabla f}^2[V(f-\hat g)_{\nu}-V_{\nu}(f-\hat g)]\rd A\\
&~~=\frac12\int_{\partial\Omega}\abs{\bar\nabla f}^2[V(f-\hat g)_{\nu}-V_{\nu}(f-\hat g)]\rd A,
}
where the second equality holds by  virtue of the fact that $\bar\nabla V=E_{n+1}$, the last equality holds because
 \eq{
\frac{2}{n+1}\int_{\Omega}ff_{n+1}\rd x
=\frac1{n+1}\int_{\Omega}\bar\div\left(f^2 E_{n+1}\right)\rd x
=\frac1{n+1}\int_{\partial\Omega}f^2
V_\nu
\rd A.
}

 Since $V_{\nu}=V$ on $T$, by virtue of the Robin conditions of $f$ and $\hat g$ on $T$ again, we know that
\eq{
V(f-\hat g)_{\nu}-V_{\nu}(f-\hat g)=0 \text{ on } T,
}
and it follows from the divergence theorem that
\eq{
\int_{\partial\Omega}\left[V(f-\hat g)_{\nu}-V_{\nu}(f-\hat g)\right]\rd A
=\int_{\Omega}V\bar\Delta (f-\hat g)\rd x
-\int_{\Omega}(f-\hat g)\bar\Delta V\rd x
=0.
}
{T}hus for any $R\in\mbR$, we have
\eq{
&\int_{\partial\Omega}\abs{\bar\nabla f}^2[V(f-\hat g)_{\nu}-V_{\nu}(f-\hat g)]\rd A\\
&~~=\int_{\partial\Omega}\left(\abs{\bar\nabla f}^2-\left(\frac{R}{n+1}\right)^2\right)[V(f-\hat g)_{\nu}-V_{\nu}(f-\hat g)]\rd A\\
&~~=\int_{\Sigma}\left(\abs{\bar\nabla f}^2-\left(\frac{R}{n+1}\right)^2\right)[V(f-\hat g)_{\nu}-V_{\nu}(f-\hat g)]\rd A,
}
and hence \eqref{eq-V-intergral2} reads
\eq{
\int_{\Omega}-Vf\bar\Delta P\rd x
=&\frac12\int_{\Sigma}\left(\abs{\bar\nabla f}^2-\left(\frac{R}{n+1}\right)^2\right)[V(f-\hat g)_{\nu}-V_{\nu}(f-\hat g)]\rd A.
}
This completes the proof.
\end{proof}
	
Now we can prove our main result.
\begin{proof}[Proof of Theorem \ref{Thm-rigid-BVP-halfball}]
Since $f_{\nu}\equiv c_{0}$ on $\Sigma$, we may repeat the argument \eqref{eq-R2} to see that the constant $c_0$ is in fact characterized by
\eq{
c_{0}
=f_{\nu}
=\frac{\int_{\Omega}V\rd x-c\int_{T}V\rd A}{\int_{\Sigma}V\rd A}. 
}

By assumption $f\leq0$ in $\Om$, so that $f<0$ in $\Omega$ by the strong maximum principle.
Since $V=x_{n+1}>0$ in $\Omega\subset \mbB_{+}^{n+1}$, %and $f<0$ in $\Omega$ by the strong maximum principle, 
taking the integral identity \eqref{int-identity halfball} (with $R=(n+1)c_{0}$) into consideration,
the Cauchy-Schwarz inequality then implies that $\bar\nabla^{2} f(x)$ is proportional to the Identity matrix for any $x\in\Omega$.
Since $\bar\Delta f=1$, we deduce that $f$ must be of the form
\eq{
f(x)
=\frac{|x-z|^{2}}{2(n+1)}+C, 
}
for some $C\in\mbR$.
By the connectedness of $\Omega$ and the fact that $f=0$ on $\Sigma$, we conclude that $\Sigma$ is spherical and hence $\Omega$ is the part of a ball centered at $z\in R_{+}^{n+1}$ with radius $r$ given by
\eq{
r
=|x-z|
=\sqrt{-2(n+1)C}\text{ on }\Sigma. 
}

Since $f_{\nu}=c_{0}$ on $\Sigma$, we find at any $x\in\Sigma$, 
\eq{
c_{0}
=f_{\nu}(x)
=\frac{|x-z|}{n+1}>0, 
}
implying that $r=(n+1)c_{0}$, $C=-\frac{r^2}{2(n+1)}=-\frac{n+1}{2}c_0^2$.
On the other hand, for $x\in T\subset\partial\mbB_+^{n+1}$, we have
 \eq{
c=f_{\bar N}-f=\<\frac{x-z}{n+1},x\>-\frac{|x-z|^{2}}{2(n+1)}-C=\frac{1-\abs{z}^2}{2(n+1)}+\frac{n+1}{2}c_0^2,
}
it follows that $\abs{z}
 =\sqrt{1+(n+1)^{2}c_{0}^{2}-2(n+1)c}$.
Moreover, since $\Sigma$ is spherical, the contact angle of $\Sigma$ and $\partial \mbB_{+}^{n+1}$ must be a constant, say $\theta\in(0,\pi)$.
By the cosine theorem, we have 
\eq{
\cos\theta=-\cos(\pi-\theta)=-\frac{1+r^2-\abs{z}^2}{2r}=-\frac{c}{c_0}.
}
Therefore, we conclude that $\Omega$ and $f$ have the desired properties, which completes the proof.

\end{proof}

\noindent{\bf 
Acknowledgments.}

X.J. is supported by Natural Science Foundation of Jiangsu Province, China (Grant No. BK20241258).

Z.L. is supported by CSC (No.202206310081)

C.X. is supported by the National Natural Science Foundation of China (Grant No. 12271449, 12126102).

We would like to thank the referee for careful reading and valuable suggestions which helped improve the exposition of the paper.

%==========

%\bibliographystyle{alpha}
\bibliography{reference.bib}

\end{document}